\newtheorem{theorem}{Theorem}[section]
\newtheorem{corollary}{Corollary}[theorem]
\newtheorem{lemma}[theorem]{Lemma}
\newtheorem{observation}[theorem]{Observation}
\theoremstyle{definition}
\newtheorem{definition}{Definition}
\theoremstyle{remark}
\newcommand{\ch}[0]{\mathrm{ch}}
\title{Strictly \(k\)-colorable graphs}
\author{Evan Leonard}
\date{\today}
\begin{document}

    \maketitle

    \begin{abstract}
        Zhu \cite{zhu2020} introduced a refined scale of choosability in 2020 and observed that the four color theorem is tight on this scale. We formalize and explore this idea of tightness in what we call strictly colorable graphs. We then characterize  all strictly colorable complete multipartite graphs.
    \end{abstract}
    
    \section{Introduction}
    
        Vertex coloring is a widely studied area that comes in many variations.
        A proper vertex coloring of a graph \(G\) assigns colors to the vertices of \(G\) so that no two adjacent vertices have the same color.
        A graph is \(k\)-colorable if it can be properly colored with \(k\) colors.
        The chromatic number \(\chi(G)\) is the minimum \(k\) for which \(G\) is \(k\)-colorable.
        If \(\chi(G)=k\), we say \(G\) is \(k\)-chromatic.
        
        List coloring (choosability) is a popular variation of proper vertex coloring introduced in 1976 by Vizing \cite{vizing1976} and indpendently in 1979 by Erdös, Rubin, and Taylor \cite{erdos1979}.
        A \(k\)-list-assignment (\(k\)-assignment) \(L\) of \(G\) assigns sets of \(k\) colors to the vertices of \(G\).
        \(G\) is \(L\)-colorable if \(L\) exhibits a proper coloring; that is, \(G\) can be properly colored where each vertex is assigned a color from its list in \(L\).
        \(G\) is \(k\)-choosable if it is \(L\)-colorable for all \(k\)-assignments \(L\).
        The choice number \(\ch(G)\) is the minimum \(k\) for which \(G\) is \(k\)-choosable.
        
        Zhu \cite{zhu2020} introduced another variation in 2020 which refines choosability into a hierarchy of integer partitions.
        In that initial paper, he uses this new system (summarized in the next section) to extend some list coloring results as well as make connections to the List Coloring Conjecture and to signed graph coloring problems. 
        
        One observation Zhu made was based on a result of Kemnitz and Voigt \cite{voigt2018} which implies that the four color theorem is tight on his refined scale of list coloring.
        Without getting very technical yet, they found a planar graph which is only properly colorable with list assignments that are essentially equivalent to the setup of a normal proper vertex coloring.
        An example of such a list assignment would be every vertex having the same list of four colors.

        This idea of tightness was not further explored. Here we formalize it as graphs which are ``strictly colorable''. We'll explore some general observations of the idea and ultimately characterize all strictly colorable complete multipartite graphs.

    \section{Summary of Zhu's Refinement}
        
        Zhu's refinement of choosability is built using integer partitions.
        An integer partition \(λ\) of a positive integer \(k\) is a multiset of positive integers whose sum is \(k\).
        For example, \(λ=\{1,1,2,3\}\) is an integer partition of \(k=7\).
        For the integer partition \(λ = \{k_1, k_2, \dots, k_t\}\) of \(k\), a \(λ\)-assignment of a graph \(G\) is a \(k\)-assignment \(L\) of \(G\) where the colors in \(\bigcup_{v \in V(G)}L(v)\) can be partitioned into sets \(C_1, C_2, \dots, C_t\) so that for each \(v \in V(G)\) and each \(i \in \{1, 2, \dots, t\}\), \(|L(v) \cap C_i| = k_i\). \(G\) is \(λ\)-choosable if every \(λ\)-assignment of \(G\) exhibits a proper coloring.

        Let \(λ\) and \(λ'\) be integer partitions of \(k\).
        We say \(λ'\) is a refinement of \(λ\) if \(λ'\) is obtained by subdividing parts of \(λ\); e.g. \(\{1,1,3\}\) is a refinement of \(\{2,3\}\).
        It follows that if \(λ'\) is a refinement of \(λ\), then every \(λ'\)-assignment of a graph \(G\) is also a \(λ\)-assignment of \(G\).
        So, every \(λ\)-choosable graph is \(λ'\)-choosable.

        A note on notation: as mentioned earlier, we'll get to results about complete multipartite graphs.
        It's common notation in the literature to use \(K_{a*b}\) as the complete \(b\)-partite graph with parts of size \(a\).
        For example \(K_{3*5} = K_{3,3,3,3,3}\) and \(K_{5*3} = K_{5,5,5}\).
        I introduce this now because it is also very useful for integer partitions; e.g. we say \(λ = \{1*4, 2\} = \{1,1,1,1,2\}\) is an integer partition of \(6\). While there are other conventions to convey multiplicity in integer partitions, we'll stick with this one for the sake of consistency with complete multipartite graphs.

        Zhu pointed out the trivial fact that being \(\{k\}\)-choosable is equivalent to being \(k\)-choosable.
        He then proved the less obvious fact that being \(\{1*k\}\)-choosable is equivalent to being \(k\)-colorable.
        Thus \(λ\)-choosability conveniently houses \(k\)-colorability and \(k\)-choosability within the same framework.
        The integer partitions of \(k\), which are \(\{k\}\) and all refinements down to \(\{1*k\}\), reveal a complicated hierarchy of colorability.

        Refinements allow us to compare partitions of the same integer.
        Zhu introduced a partial ordering of integer partitions which allows us to compare partitions of different integers; it goes as follows.
        Let \(λ\) and \(λ'\) be integer partitions of \(k\) and \(k'\) respectively where \(k \leq k'\).
        We say \(λ \leq λ'\) if \(λ'\) is a refinement of an integer partition \(λ''\) of \(k'\) obtained from \(λ\) by increasing parts of \(λ\).
        For example, \(\{3,3\} \le \{1,1,2,4\}\); to see this, use the intermediate integer partition \(\{3,5\}\).
        Zhu then proved this important theorem.
        
        \begin{theorem}[Zhu]
            Every \(λ\)-choosable graph is \(λ'\)-choosable if and only if \(λ \leq λ'\).
        \end{theorem}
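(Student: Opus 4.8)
The plan is to prove the two implications separately, working throughout with the following immediate reformulation of the order: for $\lambda = \{k_1,\dots,k_t\}$ and $\lambda' = \{k'_1,\dots,k'_s\}$ we have $\lambda \le \lambda'$ exactly when the parts of $\lambda'$ can be sorted into $t$ groups with the $i$-th group summing to at least $k_i$. (An ``increase'' of $\lambda$ is a partition $\{m_1,\dots,m_t\}$ with $m_i \ge k_i$, and $\lambda'$ refines such a partition precisely when its parts can be grouped into $t$ blocks with the $i$-th block summing to $m_i$.)

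For the ``if'' direction, by the refinement fact already recorded it suffices to treat the case in which $\lambda''$ is obtained from $\lambda$ by increasing parts, say $|L(v) \cap C''_i| = k''_i \ge k_i$ for a $\lambda''$-assignment $L$ of $G$ with classes $C''_1,\dots,C''_t$. Given such $L$ I would form a sub-assignment $L'$ by deleting, for each vertex $v$ and each $i$, exactly $k''_i - k_i$ colors from $L(v) \cap C''_i$; then $L'(v) \subseteq L(v)$, $|L'(v)| = k$, and with $C_i := C''_i \cap \bigcup_v L'(v)$ the assignment $L'$ is a $\lambda$-assignment. Any proper $L'$-coloring, which exists since $G$ is $\lambda$-choosable, is a proper $L$-coloring; composing this with the refinement step finishes the direction.

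For the ``only if'' direction I argue the contrapositive: from $\lambda \not\le \lambda'$ I must produce a graph that is $\lambda$-choosable but not $\lambda'$-choosable. I will use three elementary facts: $K_n$ is $m$-choosable for every $m \ge n$; any graph that is $m$-colorable with $m \le t$ is $\lambda$-choosable (since $\{1*t\} \le \lambda$ and $\{1*t\}$-choosability is exactly $t$-colorability); and any graph that is $m$-choosable with $m \le k$ is $\lambda$-choosable (since $\{k\} \le \lambda$). Two cases are then quick. If $k > k'$, take $G = K_{k'+1}$: it is $(k'+1)$-choosable with $k'+1 \le k$, hence $\lambda$-choosable, but the constant $\lambda'$-assignment giving every vertex the list $C'_1 \cup \dots \cup C'_s$ admits no proper coloring since $\chi(K_{k'+1}) = k'+1 > k'$. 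If $t > s$, take $G = K_{n*(s+1)}$ where the class $C'_j$ has size $k'_j + r_j$ with $r_j$ chosen large enough that $2(r_j+1) > k'_j + r_j$, and the $n = \prod_j \binom{k'_j + r_j}{k'_j}$ vertices of each part carry all ``rainbow'' lists (a $k'_j$-subset of $C'_j$ in coordinate $j$): this is a $\lambda'$-assignment, a set of colors covering one part must contain more than $|C'_j| - k'_j$ colors of some class $C'_j$, and pigeonhole over the $s+1$ parts and $s$ classes forbids a proper coloring, while $\chi(G) = s+1 \le t$ makes $G$ $\lambda$-choosable.

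The remaining case --- $k \le k'$, $2 \le t \le s$, and the parts of $\lambda'$ not groupable as above --- is the genuinely hard one, and I expect it to be the main obstacle. Since every $s$-colorable graph is $\lambda'$-choosable, any separating graph must have chromatic number at least $s+1 > t$, so neither a low chromatic number nor a small list-color number is available and $\lambda$-choosability must be checked directly. My plan is to build a graph, again complete-multipartite-like, together with a $\lambda'$-assignment whose classes $C'_j$ are spread out so that in any proper coloring the colors used on the parts would encode a grouping of $\lambda'$'s parts into $t$ bins meeting the demands $k_1,\dots,k_t$ --- impossible by hypothesis --- while conversely showing that every $\lambda$-assignment of this graph does admit pairwise-disjoint covering sets for the parts, precisely because $\lambda$'s own block sizes $k_1,\dots,k_t$ do permit the required packing. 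Verifying this last point for all $\lambda$-assignments (not merely the generic ones), presumably through a defect version of Hall's theorem tracking how much of each color class each part may claim, while simultaneously keeping the part sizes small enough for that argument yet large enough to defeat $\lambda'$, is where I expect the real difficulty to lie.
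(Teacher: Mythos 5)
This theorem is quoted from Zhu's paper and is not proved in the text you were given, so there is no in-paper argument to compare against; judging your proposal on its own terms, it is incomplete. Your ``if'' direction is fine: the reformulation of \(\lambda \le \lambda'\) as a grouping of the parts of \(\lambda'\) into \(t\) blocks with block \(i\) summing to at least \(k_i\) is correct, and the two-step reduction (thin a \(\lambda''\)-assignment down to a \(\lambda\)-sub-assignment to handle the ``increase'' step, then invoke the already-recorded refinement fact) is exactly the standard monotonicity argument. Your two easy cases of the ``only if'' direction also check out: \(K_{k'+1}\) with the constant list separates when \(k > k'\), and your rainbow-list assignment on \(K_{n*(s+1)}\) correctly forces some part to claim more than \(|C'_j| - k'_j\) colors of some class, so pigeonhole over \(s+1\) parts and \(s\) classes kills the coloring when \(t > s\).

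The genuine gap is the case you yourself flag: \(k \le k'\), \(2 \le t \le s\), and the parts of \(\lambda'\) not groupable to meet the demands \(k_1, \dots, k_t\). This is not a residual technicality --- it is the entire content of the theorem (e.g.\ already the statement that some \(\{1,1,2\}\)-choosable graph fails to be \(\{1,3\}\)-choosable lives here), and what you offer for it is a research plan, not a proof. In particular, the two halves of your plan pull against each other: to defeat \(\lambda'\) you want each part of your multipartite gadget to carry enough distinct lists that any covering set must grab a prescribed large share of some class \(C'_j\), but the more lists a part carries, the harder it becomes to verify that \emph{every} \(\lambda\)-assignment of the same graph is colorable --- and that verification cannot go through chromatic number or choice number, as you note, so it must be a bare-hands argument about arbitrary \(\lambda\)-assignments. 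You give no construction, no part sizes, and no argument for the \(\lambda\)-choosability half, so the claim that a ``defect Hall's theorem'' will close it is speculation. As written, the proposal establishes only one direction of the equivalence plus two degenerate subcases of the other.
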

        
        Continuing our example, every \(\{3,3\}\)-choosable graph is \(\{1,1,2,4\}\)-choosable.

    \section{Strictly \(k\)-Colorable Graphs}

        Let's now give better context to Kemnitz and Voigt's result. They showed that there are planar graphs which are not \(\{1,1,2\}\)-choosable.
        That is to say, by the four color theorem there are planar graphs for which \(λ=\{1,1,1,1\}\) is the only integer partition of \(4\) for which they are \(λ\)-choosable (this is what was meant by the phrase, ``essentially equivalent to the setup of a normal proper vertex coloring,'' in the introduction).
        
        Again, Zhu points out that this makes the four color theorem tight on his refined scale of choosability.
        This idea of graphs being \(λ\)-choosable strictly for \(λ=\{1*k\}\) and no other integer partitions of \(k\) was not further explored by Zhu beyond this example.
        Here we formalize the idea.

        \begin{definition}\label{strict:def}
            A graph \(G\) is strictly \(k\)-colorable if the only integer partition \(λ\) of \(k\) for which \(G\) is \(λ\)-choosable is \(λ = \{1*k\}\).
        \end{definition}

        Here is the motivation behind the chosen terminology.
        We say ``strictly \(k\)-\emph{colorable}'' because being \(\{1*k\}\)-choosable is equivalent to being \(k\)-colorable.
        We say ``\emph{strictly} \(k\)-colorable'' because it's the only partition of \(k\) for which it is \(λ\)-choosable.
        The following observation provides a nice alternate definition of strict \(k\)-colorability.
        
        \begin{observation}
            A graph \(G\) is strictly \(k\)-colorable if and only if \(G\) is \(k\)-colorable and not \(\{1*(k-2),2\}\)-choosable.
        \end{observation}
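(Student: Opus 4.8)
The plan is to prove each direction directly from the two facts Zhu supplies: that being \(\{1*k\}\)-choosable is equivalent to being \(k\)-colorable, and that if \(\lambda'\) is a refinement of \(\lambda\) then every \(\lambda\)-choosable graph is \(\lambda'\)-choosable. Throughout I would assume \(k\ge 2\) so that \(\{1*(k-2),2\}\) is a genuine partition of \(k\); the case \(k=1\) is degenerate, since \(\{1\}\) is the only partition of \(1\) and every \(1\)-colorable graph is trivially strictly \(1\)-colorable.

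For the forward direction, suppose \(G\) is strictly \(k\)-colorable. By Definition~\ref{strict:def}, \(G\) is \(\{1*k\}\)-choosable, so \(G\) is \(k\)-colorable. Since \(\{1*(k-2),2\}\) is a partition of \(k\) that is different from \(\{1*k\}\) (here is where \(k\ge 2\) is used), the definition of strict \(k\)-colorability immediately forces \(G\) not to be \(\{1*(k-2),2\}\)-choosable.

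For the reverse direction, suppose \(G\) is \(k\)-colorable and not \(\{1*(k-2),2\}\)-choosable. Being \(k\)-colorable already gives that \(G\) is \(\{1*k\}\)-choosable, so it remains to rule out every other partition \(\lambda\) of \(k\). The key step is the elementary observation that \(\{1*(k-2),2\}\) is a refinement of every partition \(\lambda\) of \(k\) with \(\lambda\ne\{1*k\}\): any such \(\lambda\) contains a part \(a\ge 2\), and I would subdivide that part into \(a-2\) copies of \(1\) together with a single \(2\), while subdividing every remaining part entirely into \(1\)'s; the resulting refinement is exactly \(\{1*(k-2),2\}\). Since every \(\lambda\)-choosable graph is choosable for every refinement of \(\lambda\), if \(G\) were \(\lambda\)-choosable for some \(\lambda\ne\{1*k\}\) then \(G\) would be \(\{1*(k-2),2\}\)-choosable, contradicting our hypothesis. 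Hence \(\{1*k\}\) is the only partition of \(k\) for which \(G\) is \(\lambda\)-choosable, i.e., \(G\) is strictly \(k\)-colorable.

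I do not anticipate a real obstacle: the proof is essentially bookkeeping with the definitions together with the two cited results. The one point that deserves to be stated carefully is the refinement claim used in the reverse direction, namely that \(\{1*(k-2),2\}\) lies below every nontrivial partition of \(k\) in the refinement order; that is the conceptual heart of the observation, even though its verification is immediate.
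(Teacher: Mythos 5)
Your proof is correct and follows essentially the same route as the paper's: the forward direction is immediate from the definition, and the reverse direction hinges on the fact that \(\{1*(k-2),2\}\) is a refinement of every partition of \(k\) other than \(\{1*k\}\), so non-\(\{1*(k-2),2\}\)-choosability propagates to all such partitions. Your explicit verification of that refinement claim (splitting a part \(a\ge 2\) into \(a-2\) ones and a single \(2\)) is a welcome bit of detail the paper leaves implicit, but the argument is the same.
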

        
        \begin{proof}
            The forward implication follows from Definition \ref{strict:def}.
            Let \(λ = \{1*(k-2),2\}\).
            Suppose \(G\) is \(k\)-colorable and not \(λ\)-choosable. Let \(λ' \ne \{1*k\}\) be an integer partition of \(k\).
            Then \(λ\) is a refinement of \(λ'\).
            Thus every \(λ\)-assignment of \(G\) is a \(λ'\)-assignment of \(G\).
            Because \(G\) is not \(λ\)-choosable, \(G\) is not \(λ'\)-choosable.
            Therefore, \(G\) is strictly \(k\)-colorable.
        \end{proof}
        
        So to show that any given graph \(G\) is strictly \(k\)-colorable, it suffices to show that \(G\) is \(k\)-colorable and then find a \(\{1*(k-2),2\}\)-assignment for which \(G\) is not properly colorable.
        Here are two more interesting and helpful observations.
        
        \begin{observation}
            If \(G\) is strictly \(k\)-colorable, then \(\chi(G) = k\).
        \end{observation}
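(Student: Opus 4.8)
The plan is to prove $\chi(G)=k$ by establishing the inequalities $\chi(G)\le k$ and $\chi(G)\ge k$ separately. The upper bound is immediate: a strictly $k$-colorable graph is by definition $\{1*k\}$-choosable, and since $\{1*k\}$-choosability is equivalent to $k$-colorability (as recorded above), we get $\chi(G)\le k$.

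For the matching lower bound I would argue by contradiction. Assume $\chi(G)\le k-1$, so $G$ is $(k-1)$-colorable, i.e.\ $\{1*(k-1)\}$-choosable. The key step is the order relation $\{1*(k-1)\}\le\{1*(k-2),2\}$ in Zhu's partial order: the partition $\{1*(k-2),2\}$ of $k$ is obtained from $\{1*(k-1)\}$ by increasing a single part from $1$ to $2$, and it is trivially a refinement of itself, so the defining condition for $\le$ is met. By Zhu's Theorem, $\{1*(k-1)\}$-choosability implies $\{1*(k-2),2\}$-choosability, hence $G$ is $\{1*(k-2),2\}$-choosable. This contradicts the Observation immediately preceding the statement, which asserts that no strictly $k$-colorable graph is $\{1*(k-2),2\}$-choosable. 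Therefore $\chi(G)\ge k$, and combining the two bounds gives $\chi(G)=k$.

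I do not expect a genuine obstacle, as the argument is short; the only delicate points are unwinding the definition of $\le$ to select the correct intermediate partition --- alternatively one can bypass Zhu's Theorem and directly extend a proper $(k-1)$-coloring to an arbitrary $\{1*(k-2),2\}$-assignment, assigning each vertex the forced color from its list within the appropriate singleton class and using that the color class corresponding to the size-$2$ part is independent --- and the degenerate case $k=1$, in which $\{1*(k-2),2\}$ is undefined but every nonempty edgeless graph is trivially strictly $1$-colorable with $\chi(G)=1$, so the claim still holds.
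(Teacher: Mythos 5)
Your argument is correct and follows essentially the same route as the paper: the upper bound comes from $\{1*k\}$-choosability being $k$-colorability, and the lower bound from $\{1*(k-1)\}\le\{1*(k-2),2\}$ in Zhu's order, so a $(k-1)$-colorable graph would be $\{1*(k-2),2\}$-choosable, contradicting strictness. The direct coloring-extension alternative and the $k=1$ remark are fine additions but do not change the substance.
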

        
        \begin{proof}
            If \(\chi(G)>k\), then \(G\) is not \(k\)-colorable.
            Let \(λ = \{1*(k-1)\}\).
            Suppose \(G\) is \(λ\)-choosable [i.e. \(\chi(G)<k\)].
            Let \(λ' = \{1*(k-2),2\}\).
            Note \(λ \le λ'\). So \(G\) is \(λ'\)-choosable, and therefore not strictly \(k\)-colorable.
        \end{proof}
        
        This means there is at most one positive integer \(k\) for which a graph \(G\) can be strictly \(k\)-colorable, that is \(k=\chi(G)\).
        If \(G\) is not strictly \(\chi(G)\)-colorable, then \(G\) is not strictly \(k\)-colorable for any \(k\in\mathbb{N}\).
        
        \begin{observation}\label{subgraph:obs}
            Suppose \(H\) is strictly \(k\)-colorable and \(H \subseteq G\). Then \(G\) is strictly \(k\)-colorable if and only if \(\chi(G) = k\).
        \end{observation}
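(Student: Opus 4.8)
The plan is to treat the two directions of the biconditional separately, with essentially all the work in the converse. The forward direction is free: if \(G\) is strictly \(k\)-colorable then \(\chi(G)=k\) is exactly the content of the previous observation, so nothing new is needed. For the converse, assume \(\chi(G)=k\) and aim to show \(G\) is strictly \(k\)-colorable. By the alternate characterization (the observation that strict \(k\)-colorability is equivalent to being \(k\)-colorable and not \(\{1*(k-2),2\}\)-choosable), it suffices to check two things: that \(G\) is \(k\)-colorable, which is immediate from \(\chi(G)=k\), and that \(G\) is not \(\{1*(k-2),2\}\)-choosable. (Here I assume \(k\ge 2\); the case \(k=1\) is vacuous, since \(\{1\}\) is the only integer partition of \(1\) and hence every \(1\)-colorable graph is trivially strictly \(1\)-colorable.)

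So the core task is to exhibit a \(\{1*(k-2),2\}\)-assignment \(L'\) of \(G\) admitting no proper coloring. Since \(H\) is strictly \(k\)-colorable, the same alternate characterization supplies a \(\{1*(k-2),2\}\)-assignment \(L\) of \(H\) with no proper coloring; fix the accompanying partition \(C_1,\dots,C_{k-1}\) of \(\bigcup_{v\in V(H)}L(v)\), where every vertex of \(H\) meets each of \(C_1,\dots,C_{k-2}\) in one color and meets \(C_{k-1}\) in two colors. I would extend \(L\) to \(G\) as follows: pick \(k\) brand-new colors \(a_1,\dots,a_{k-2},b_1,b_2\) not occurring in any list of \(L\), declare \(a_i\in C_i\) for \(1\le i\le k-2\) and \(b_1,b_2\in C_{k-1}\), set \(L'(v)=L(v)\) for \(v\in V(H)\), and set \(L'(u)=\{a_1,\dots,a_{k-2},b_1,b_2\}\) for every \(u\in V(G)\setminus V(H)\).

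The next step is the routine verification that \(L'\) is a genuine \(\{1*(k-2),2\}\)-assignment of \(G\): its lists are partitioned by the enlarged classes \(C_i\cup\{a_i\}\) (for \(1\le i\le k-2\)) and \(C_{k-1}\cup\{b_1,b_2\}\); a vertex of \(H\) meets each class in the same number of colors as before because the fresh colors lie outside its list, and each new vertex meets each of the first \(k-2\) classes in exactly one color and the last class in exactly two colors. Finally, \(G\) admits no \(L'\)-coloring: a proper \(L'\)-coloring of \(G\), restricted to \(V(H)\), is a proper coloring of \(H\) in which every \(v\in V(H)\) gets a color from \(L'(v)=L(v)\), i.e.\ an \(L\)-coloring of \(H\) — contradicting the choice of \(L\). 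Hence \(G\) is not \(\{1*(k-2),2\}\)-choosable, so by the alternate characterization it is strictly \(k\)-colorable.

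I do not expect a serious obstacle; the one point requiring care is confirming that the extension \(L'\) is still an honest \(\{1*(k-2),2\}\)-assignment rather than merely a \(k\)-assignment, which is precisely why the construction routes the new colors into the pre-existing partition classes and leaves the lists on \(H\) untouched. Everything else — the non-colorability of \(G\) via restriction to \(H\), and the reduction through the alternate characterization — is immediate.
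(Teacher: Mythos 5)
Your proof is correct and takes essentially the same route as the paper: the forward direction is exactly the preceding observation, and the converse goes through the alternate characterization by showing \(G\) inherits a bad \(\{1*(k-2),2\}\)-assignment from \(H\). The only difference is that you explicitly construct the extension of the bad assignment to \(V(G)\setminus V(H)\) using fresh colors routed into the existing classes \(C_i\) — a verification the paper's one-line proof leaves implicit.
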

        
        \begin{proof}
            The forward is already proven.
            Suppose \(\chi(G) = k\). Because \(H\) is not \(\{1 * (k-2), 2\}\)-choosable, neither is \(G\).
        \end{proof}
        
        With this, if you'd like to show that some graph \(G\) is strictly \(k\)-colorable, it suffices to show that a subgraph \(H \subseteq G\) with \(\chi(H)=\chi(G)\) is strictly \(k\)-colorable.
        Put another way, if you show that some graph \(H\) is strictly \(k\)-colorable, then you get every \(k\)-chromatic graph containing \(H\) for free.
        
        Some more can be said in general about the lowest values of \(k\).
        If \(G\) is strictly \(1\)-colorable, then \(G\) is an independent set.
        Because the only integer partition of \(1\) is \(\{1\}\), all independent sets are strictly \(1\)-colorable.
        If \(G\) is strictly \(2\)-colorable, then \(G\) is bipartite.
        The only two integer partitions of \(2\) are \(\{1,1\}\) and \(\{2\}\).
        Thus, a bipartite graph is strictly \(2\)-colorable if and only if it is not \(2\)-choosable.
        Erdös, Rubin, and Taylor \cite{erdos1979} characterized all \(2\)-choosable graphs, so all strictly \(2\)-colorable graphs are characterized.
        As with many problems, the fun starts with \(k\ge3\), so from here on, we will consider strict \(k\)-colorability only for \(k \ge 3\).

        \section{Strictly \(k\)-Colorable Complete \(k\)-Partite Graphs}

    Complete multipartite graphs are typically of interest when studying choosability.
    Their nice structure can lend to tidy results.
    Erdös, Rubin, and Taylor \cite{erdos1979} proved that \(\ch(K_{2*k})=k\).
    \(K_{2*k}\) is called a chromatic-choosable graph since \(\chi(K_{2*k}) = \ch(K_{2*k})\).
    This, of course, disqualifies it from being strictly \(k\)-colorable.
    In this context, strictly \(k\)-colorable graphs can be thought of as one end of the refinement spectrum and chromatic-choosable graphs as the opposite end.
    
    Kierstead proved in 2000 \cite{kierstead2000} that \(\ch(K_{3*k}) = \lceil(4k - 1)/3\rceil\) and proved with Salmon and Wang in 2016 \cite{kierstead2016} that \(\ch(K_{4*k}) = \lceil(3k-1)/2\rceil\).
    These graphs with tidy choice numbers are not chromatic-choosable.
    Might they be strictly \(k\)-colorable?
    Yes.
    
    \begin{lemma}\label{k3k:lem}
        Let \(k\ge3\). \(K_{3*k}\) is strictly \(k\)-colorable.
    \end{lemma}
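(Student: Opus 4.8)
The plan is to apply the Observation immediately after the definition: since $K_{3*k}$ is complete $k$-partite we have $\chi(K_{3*k}) = k$ for free, so the whole task reduces to exhibiting one $\{1*(k-2),2\}$-assignment $L$ of $K_{3*k}$ that admits no proper coloring. (In particular I would not need Kierstead's formula $\ch(K_{3*k}) = \lceil(4k-1)/3\rceil$ — a single well-chosen bad assignment suffices.)

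First I would record the standard reformulation of proper colorings of a complete multipartite graph. Write the parts as $V_1, \dots, V_k$ with $V_i = \{v_i^1, v_i^2, v_i^3\}$. A map $c$ with $c(v) \in L(v)$ for all $v$ is a proper coloring precisely when the color sets $S_i := c(V_i)$ are pairwise disjoint; conversely, pairwise disjoint sets $S_1, \dots, S_k$ with $S_i \cap L(v) \ne \emptyset$ for every $v \in V_i$ — i.e.\ each $S_i$ is a transversal of the three lists in part $i$ — yield a proper coloring by choosing $c(v)$ in $S_i \cap L(v)$. So it is enough to design $L$ for which no family of pairwise disjoint transversals exists.

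Then I would give the construction: take the color set to be $\{1, 2, \dots, k-2\} \cup \{x, y, z\}$ and give \emph{every} part the same triple of lists
\[
\{1, \dots, k-2, x, y\}, \quad \{1, \dots, k-2, x, z\}, \quad \{1, \dots, k-2, y, z\}.
\]
Each list has size $k$, and taking $C_i = \{i\}$ for $i \le k-2$ together with $C_{k-1} = \{x,y,z\}$ one checks $|L(v) \cap C_i| = 1$ for $i \le k-2$ and $|L(v) \cap C_{k-1}| = 2$, so $L$ is genuinely a $\{1*(k-2),2\}$-assignment. Suppose for contradiction that pairwise disjoint transversals $S_1, \dots, S_k$ exist. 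Call part $i$ \emph{heavy} if $S_i \subseteq \{x,y,z\}$ and \emph{light} otherwise. A heavy $S_i$ must meet each of $\{x,y\}$, $\{x,z\}$, $\{y,z\}$, hence $|S_i| \ge 2$; since any two subsets of the $3$-element set $\{x,y,z\}$ of size at least $2$ intersect, at most one part is heavy. Thus at least $k-1$ parts are light, and each light part's transversal contains a color from $\{1, \dots, k-2\}$; by disjointness these colors are distinct across light parts, forcing $k-1 \le k-2$ — a contradiction. Hence $K_{3*k}$ is not $L$-colorable, and the Observation yields strict $k$-colorability.

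The technical heart is the choice of the three "overlap" colors $x,y,z$: one wants the derived $2$-sets $\{x,y\}, \{x,z\}, \{y,z\}$ to have no size-$1$ transversal yet live in a universe so small (size $3$) that any two of their transversals collide — this is exactly what starves the $k-2$ singleton color classes and blocks all $k$ parts at once. Everything else (the reduction through the Observation, the transversal reformulation, and the final counting argument) is routine, so I do not anticipate a serious obstacle, only ordinary care in the bookkeeping — in particular verifying the partition-size conditions that make $L$ a bona fide $\{1*(k-2),2\}$-assignment.
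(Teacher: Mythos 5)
Your proposal is correct and is essentially the paper's own proof: your lists $\{1,\dots,k-2,x,y\}$, $\{1,\dots,k-2,x,z\}$, $\{1,\dots,k-2,y,z\}$ on every part are exactly the paper's assignment $\{0,1\}\cup A$, $\{0,2\}\cup A$, $\{1,2\}\cup A$ with $A=\{3,\dots,k\}$, up to relabeling, and the counting argument (the $k-2$ singleton classes cover at most $k-2$ parts while the three overlap colors cover at most one) is the same. Your heavy/light transversal bookkeeping is just a slightly more explicit write-up of the paper's ``all the color groups together can fully color at most $k-1$ partite sets'' step.
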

    
    \begin{proof}
        \(K_{3*k}\) is certainly \(k\)-colorable.
        Let \(λ_k = \{1*(k-2),2\}\).
        It suffices to show that \(K_{3*k}\) is not \(λ_k\)-choosable.
        Let \(V_1, V_2, \dots, V_k\) be the partite sets of \(K_{3*k}\).
        Define \(L_k\) to be the following \(k\)-assignment:
        \begin{align*}
            &L_k(V_1)       &&L_k(V_2)          &&          &&L_k(V_k)\\
            &\{0,1\} \cup A &&\{0,1\} \cup A    &&          &&\{0,1\} \cup A\\
            &\{0,2\} \cup A &&\{0,2\} \cup A    &&\cdots    &&\{0,2\} \cup A\\
            &\{1,2\} \cup A &&\{1,2\} \cup A    &&          &&\{1,2\} \cup A
        \end{align*}
        Where \(A = \{3, \dots, k\}\).
        Note the dots (\dots) mean count up by 1 between \(3\) and \(k\).
        For example, if \(k=7\), the second vertex in \(V_1\) has the list assignment \(\{0,2,3,4,5,6,7\}\).
        Let \(C_1 = \{0,1,2\}\) and \(C_i = \{i+1\}\) for \(2 \le i \le k-1\).
        Then \(|L_k(v) \cap C_1| = 2\) and \(|L_k(v) \cap C_i|= 1\) for all \(v \in V(K_{3*k})\) and \(2 \le i \le k-1\).
        Thus, \(L_k\) is a \(λ_k\)-assignment to \(K_{3*k}\).
        
        There are \(k\) partite sets and \(k-1\) color groups.
        For \(2 \le i \le k-1\), the colors of \(C_i\) can appear on at most \(1\) partite set of \(K_{3*k}\).
        The colors of \(C_1\) cannot fully color \(2\) partite sets.
        Hence, all the color groups together can fully color at most \(k-1\) partite sets simultaneously.
        So, \(K_{3*k}\) is not \(L_k\)-colorable, and therefore not \(λ_k\)-choosable.
        Therefore, \(K_{3*k}\) is strictly \(k\)-colorable.
    \end{proof}

    By Observation \ref{subgraph:obs}, so is \(K_{4*k}\).
    It's worth noting that this lemma is also true for \(k = 1,2\), but since independent sets and bipartite graphs are solved, we only care for \(k \ge 3\).

    It turns out that we can characterize all strictly \(k\)-colorable complete \(k\)-partite graphs.
    The strategy is to find a set of subgraphs which are strictly \(k\)-colorable.
    Then we will show it is necessary and sufficient for any strictly \(k\)-colorable complete \(k\)-partite graph to contain one of these subgraphs.
    There are three such subgraphs in total.
    Our first is the previously mentioned \(K_{3*k}\).
    
    To introduce the remaining two, we'll make use of a result by Hoffman and Johnson \cite{johnson1993}.
    They showed that there is a unique uncolorable \(m\)-assignment (up to relabeling) of \(K_{m,n}\) when \(n=m^m\).
    For \(K_{2,4}\) with partite sets \(V_1\) and \(V_2\), that unique assignment is \(L(V_1) = \{\{1,2\},\{3,4\}\}\), \(L(V_2) = \{\{1,3\},\{1,4\},\{2,3\},\{2,4\}\}\).
    We'll call this the ``unique bad \(2\)-assignment of \(K_{2,4}\).''
    
    \begin{lemma}\label{k246:lem}
        Let \(k\ge3\). \(K_{2, 4, 6*(k-2)}\) is strictly \(k\)-colorable.
    \end{lemma}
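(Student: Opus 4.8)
The plan is to use the alternate characterization of strict \(k\)-colorability: since \(K_{2,4,6*(k-2)}\) is complete \(k\)-partite it is certainly \(k\)-colorable, so it suffices to produce a \(λ_k\)-assignment \(L\) (where \(λ_k = \{1*(k-2),2\}\)) admitting no proper coloring. Label the partite sets \(W_1\) (of size \(2\)), \(W_2\) (of size \(4\)), and \(U_3,\dots,U_k\) (of size \(6\)), take \(C_1 = \{1,2,3,4\}\) for the part of the color partition of size \(2\), and let \(C_2 = \{c_2\},\dots,C_{k-1}=\{c_{k-1}\}\) be the \(k-2\) singleton parts (with the \(c_j\) new colors). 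Give every vertex the list consisting of some \(2\)-subset of \(C_1\) together with all of \(c_2,\dots,c_{k-1}\); this is automatically a \(λ_k\)-assignment, and the only design choice is the \(2\)-subset of \(C_1\) assigned to each vertex. I would make that choice so that \(W_1\cup W_2\) carries the unique bad \(2\)-assignment of \(K_{2,4}\) — the two vertices of \(W_1\) get \(C_1\)-lists \(\{1,2\}\) and \(\{3,4\}\), the four vertices of \(W_2\) get \(\{1,3\},\{1,4\},\{2,3\},\{2,4\}\) — while the six vertices of each \(U_i\) receive the six distinct \(2\)-subsets of \(C_1\), one apiece.

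Next I would examine a hypothetical proper \(L\)-coloring. Each singleton color \(c_j\) is a full color class and therefore an independent set, so it is confined to a single partite set; as there are only \(k-2\) singleton colors but \(k\) partite sets, at least two partite sets \(P,Q\) use no singleton color at all, meaning every vertex of \(P\) and of \(Q\) is colored from \(C_1\). The argument then rests on three small facts about how many colors of the four-element set \(C_1\) a partite set must use when colored entirely from \(C_1\): a \(U_i\) uses at least \(3\) colors (any two colors leave the complementary \(2\)-subset, hence some vertex's list, uncovered); \(W_1\) uses exactly a two-element ``mixed'' set \(\{a,b\}\) with \(a\in\{1,2\}\) and \(b\in\{3,4\}\); and \(W_2\) can be colored from a two-color set only if that set is \(\{1,2\}\) or \(\{3,4\}\), never from a mixed pair. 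Since \(P\) and \(Q\) lie in different parts they are adjacent, so the color sets they use are disjoint subsets of \(C_1\). For the pairs \(\{U_i,U_j\}\), \(\{U_i,W_1\}\), \(\{U_i,W_2\}\) this forces \(3+3>4\) or \(3+2>4\), impossible; and for \(\{W_1,W_2\}\), \(W_1\) uses a mixed pair, leaving \(W_2\) only the complementary mixed pair, which cannot color \(W_2\). In every case we reach a contradiction, so \(K_{2,4,6*(k-2)}\) is not \(L\)-colorable, hence not \(λ_k\)-choosable, hence strictly \(k\)-colorable.

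The bulk of the writing is in checking those three color-count facts and confirming that the pigeonhole conclusion (at least two fully-\(C_1\)-colored parts) meshes with them in each case. The delicate case — and the one place where the full strength of the ``unique bad \(2\)-assignment of \(K_{2,4}\)'' is genuinely needed rather than crude counting (note \(2+2\le 4\), so counting alone does not kill \(\{W_1,W_2\}\)) — is the pair \(\{W_1,W_2\}\). I would also add a line confirming the edge case \(k=3\), where there is a single singleton color and only the parts \(W_1,W_2,U_3\), so that at least two of these three are colored from \(C_1\) and the same case analysis applies. Finally, it is worth remarking via Observation \ref{subgraph:obs} that this immediately yields strict \(k\)-colorability for every complete \(k\)-partite graph whose part sizes are at least \(2,4,6,\dots,6\).
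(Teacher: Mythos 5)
Your proposal is correct and takes essentially the same approach as the paper: the identical list assignment (the bad \(K_{2,4}\) lists on the parts of sizes \(2\) and \(4\), all six \(2\)-subsets of \(C_1=\{1,2,3,4\}\) on each part of size \(6\), singleton color groups elsewhere) and the same pigeonhole step forcing at least two partite sets to be colored entirely from \(C_1\). The only difference is cosmetic: you finish by direct color-counting on each type of pair, whereas the paper observes that every pair of parts contains the unique bad \(2\)-assignment of \(K_{2,4}\); both verifications are valid.
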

    
    \begin{proof}
        Let \(G_k = K_{2, 4, 6*(k-2)}\).
        \(G_k\) is certainly \(k\)-colorable.
        Let \(λ_k = \{1*(k-2),2\}\).
        It suffices to show that \(G_k\) is not \(λ_k\)-choosable.
        Let \(V_1, V_2, \dots, V_k\) be the partite sets of \(G_k\) such that \(|V_1|=2\), \(|V_2|=4\), and \(|V_i|=6\) for \(3 \le i \le k\).
        Define \(L_k\) to be the following \(k\)-assignment on \(G_k\):
        \begin{align*}
            &L_k(V_1)       &&L_k(V_2)          &&L_k(V_3)          &&          &&L_k(V_k)\\
            &\{1,2\} \cup A &&\{1,3\} \cup A    &&\{1,3\} \cup A    &&          &&\{1,3\} \cup A\\
            &\{3,4\} \cup A &&\{1,4\} \cup A    &&\{1,4\} \cup A    &&          &&\{1,4\} \cup A\\
            &               &&\{2,3\} \cup A    &&\{2,3\} \cup A    &&\cdots    &&\{2,3\} \cup A\\
            &               &&\{2,4\} \cup A    &&\{2,4\} \cup A    &&          &&\{2,4\} \cup A\\
            &               &&                  &&\{1,2\} \cup A    &&          &&\{1,2\} \cup A\\
            &               &&                  &&\{3,4\} \cup A    &&          &&\{3,4\} \cup A
        \end{align*}
        Where \(A = \{5, \dots, k+2\}\).
        Let \(C_1 = \{1,2,3,4\}\) and let \(C_i = \{i+3\}\) for \(2 \le i \le k-1\).
        Then \(|L_k(v) \cap C_1| = 2\) and \(|L_k(v) \cap C_i| = 1\) for all \(v \in V(G_k)\) and \(2 \le i \le k-1\).
        Thus, \(L_k\) is a \(λ_k\)-assignment to \(G_k\).
        
        Notice there are \(k\) partite sets and \(k-1\) color groups.
        In a proper \(L_k\)-coloring of \(G_k\), each color group \(C_i\) where \(2 \le i \le k-1\) can be seen on at most one partite set.
        This means in a proper \(L_k\)-coloring of \(G_k\), at least two partite sets must only see colors from \(C_1\).
        But between every pair of partite sets, their colors from \(C_1\) contains the unique bad 2-assignment of \(K_{2,4}\).
        Thus you can't completely color any pair of partite sets using only \(C_1\), so \(G_k\) is not \(L_k\)-colorable.
        Hence, \(G_k\) is not \(λ_k\)-choosable.
        Therefore, \(G_k\) is strictly \(k\)-colorable.
    \end{proof}
    
    \begin{lemma}\label{k255:lem}
        Let \(k\ge3\). \(K_{2, 5*(k-1)}\) is strictly \(k\)-colorable.
    \end{lemma}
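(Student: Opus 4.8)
The plan is to follow the template of Lemmas \ref{k3k:lem} and \ref{k246:lem}. Write $G_k = K_{2,5*(k-1)}$; it is complete $k$-partite, hence $k$-colorable, so (as in the previous two lemmas) it is enough to exhibit a $λ_k$-assignment $L_k$, where $λ_k = \{1*(k-2),2\}$, admitting no proper coloring. Let the partite sets be $V_1,\dots,V_k$ with $|V_1| = 2$ and $|V_i| = 5$ for $2 \le i \le k$. I would again take the doubled color class to be $C_1 = \{1,2,3,4\}$, set $C_i = \{i+3\}$ for $2 \le i \le k-1$, put $A = \{5,\dots,k+2\}$, assign the two vertices of $V_1$ the lists $\{1,2\}\cup A$ and $\{3,4\}\cup A$, and assign every size-$5$ part $V_i$ ($i \ge 2$) the same five lists $\{1,3\}\cup A$, $\{1,4\}\cup A$, $\{2,3\}\cup A$, $\{2,4\}\cup A$, $\{1,2\}\cup A$. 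One checks exactly as before that $|L_k(v)\cap C_1| = 2$ and $|L_k(v)\cap C_i| = 1$ for $2 \le i \le k-1$, so $L_k$ is a $λ_k$-assignment. Note that the lists on $V_1$ together with the first four lists on any size-$5$ part are a relabeling of the unique bad $2$-assignment of $K_{2,4}$ from Lemma \ref{k246:lem}.

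Next I would run the pigeonhole step verbatim from Lemma \ref{k246:lem}: there are $k$ partite sets but only $k-1$ color classes, each singleton $C_i$ ($i\ge2$) occurs on at most one partite set in a proper coloring, so in any proper $L_k$-coloring at least two partite sets are colored entirely out of $C_1 = \{1,2,3,4\}$. It then remains to rule out, for every pair of partite sets, a proper coloring of that pair using only colors of $C_1$. If one of the pair is $V_1$: its vertices receive some $x\in\{1,2\}$ and some $y\in\{3,4\}$, and the size-$5$ partner then contains a vertex whose restricted list is exactly $\{x,y\}$ and so cannot be colored --- this is just the planted bad $K_{2,4}$-assignment. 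If both of the pair are size-$5$ parts $V_i,V_j$: letting $S,T\subseteq\{1,2,3,4\}$ be the color sets used on them, properness forces $S\cap T=\emptyset$, hence $|S|+|T|\le 4$; but each of $S,T$ must meet all five of $\{1,3\},\{1,4\},\{2,3\},\{2,4\},\{1,2\}$, no singleton meets all five, and the only $2$-element subset of $\{1,2,3,4\}$ meeting all five is $\{1,2\}$, so $|S|,|T|\ge 2$ and, being disjoint, they are not both $\{1,2\}$; hence one of them has size at least $3$ and $|S|+|T|\ge 5$, a contradiction. Therefore $G_k$ has no proper $L_k$-coloring, is not $λ_k$-choosable, and is strictly $k$-colorable.

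The one place the construction has to be chosen with care --- and the only real obstacle --- is the size-$5$-versus-size-$5$ case. The four ``transversal'' pairs $\{1,3\},\{1,4\},\{2,3\},\{2,4\}$ already wreck every $K_{2,5}$ pairing on their own, but two size-$5$ parts with different fifth lists could have their sides colored out of $\{1,2\}$ and out of $\{3,4\}$ respectively, so the induced $K_{5,5}$ would be colorable; giving all of them the common fifth list $\{1,2\}\cup A$ is precisely what makes the argument above go through (equivalently: the colors used on a side of that $K_{5,5}$ must form a vertex cover of $K_4$ minus an edge, whose unique minimum cover has size $2$, so two such covers cannot be disjoint). Everything else --- verifying $L_k$ is a $λ_k$-assignment, the pigeonhole count, and the $K_{2,4}$ step --- is routine bookkeeping already carried out in Lemmas \ref{k3k:lem} and \ref{k246:lem}.
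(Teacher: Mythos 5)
Your construction is identical to the paper's (same lists on $V_1$ and on every size-$5$ part, same color groups, same pigeonhole count), and your argument is correct; the only difference is that you explicitly verify the final claim that no pair of partite sets can be colored from $C_1$ alone, including the two-size-$5$-parts case via the disjoint-hitting-sets count, where the paper simply asserts it. So this is essentially the paper's proof, with the last step carried out in more detail.
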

    
    \begin{proof}
        Let \(G_k = K_{2, 5*(k-1)}\).
        \(G_k\) is certainly \(k\)-colorable.
        Let \(λ_k = \{1*(k-2),2\}\).
        It suffices to show that \(G_k\) is not \(λ_k\)-choosable. Let \(V_1, V_2, \dots, V_k\) be the partite sets of \(G_k\) such that \(|V_1|=2\), and \(|V_i|=5\) for \(2 \le i \le k\).
        Define \(L_k\) to be the following \(k\)-assignment on \(G_k\):
        \begin{align*}
            &L_k(V_1)       &&L_k(V_2)          &&          &&L_k(V_k)\\
            &\{1,2\} \cup A &&\{1,3\} \cup A    &&          &&\{1,3\} \cup A\\
            &\{3,4\} \cup A &&\{1,4\} \cup A    &&          &&\{1,4\} \cup A\\
            &               &&\{2,3\} \cup A    &&\cdots    &&\{2,3\} \cup A\\
            &               &&\{2,4\} \cup A    &&          &&\{2,4\} \cup A\\
            &               &&\{1,2\} \cup A    &&          &&\{1,2\} \cup A
        \end{align*}
        Where \(A = \{5, \dots, k+2\}\).
        Let \(C_1 = \{1,2,3,4\}\) and let \(C_i = \{i+3\}\) for \(2 \le i \le k-1\).
        Then \(|L_k(v) \cap C_1| = 2\) and \(|L_k(v) \cap C_i| = 1\) for all \(v \in V(G_k)\) and \(2 \le i \le k-1\).
        Thus, \(L_k\) is a \(λ_k\)-assignment to \(G_k\).
        
        Just as in the previous proofs, there are \(k\) partite sets and \(k-1\) color groups.
        The color groups of size 1 can together color at most \(k-2\) partite sets, leaving at least \(2\) partite sets left to be colored by \(C_1\).
        However, notice between any pair of partite sets, it is impossible for \(C_1\) to be used alone.
        Therefore, \(G_k\) is strictly \(k\)-colorable.
    \end{proof}
    
    We'll use these three strictly \(k\)-colorable complete \(k\)-partite graphs to characterize all such graphs.
    Before getting to it, we'll make use of another way that colorability can be thought of in terms of integer partitions and how that relates to our current notion of \(λ\)-choosability.
    
    \begin{definition}
        Let \(λ = \{k_1, k_2, \dots, k_t\}\) be an integer partition of \(k\).
        A graph \(G\) is \(λ\)-partitionable if there exists a partition \(V_1, V_2, \dots, V_t\) of the vertex set \(V(G)\) such that \(G[V_i]\) is \(k_i\)-choosable for \(1 \le i \le t\).
        Such a partition of \(V(G)\) is called a \(λ\)-partition.
    \end{definition}

    This idea of distinguishing \(λ\)-partitionability from \(λ\)-choosability was introduced to me by Greg Puleo in unpublished work he did along with Dan Cranston. Here is an observation of how the two ideas compare.
    
    \begin{observation}
        If \(G\) is \(λ\)-partitionable, then \(G\) is \(λ\)-choosable.
    \end{observation}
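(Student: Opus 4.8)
The plan is to take an arbitrary \(λ\)-assignment \(L\) of \(G\) and construct a proper \(L\)-coloring by coloring each block of a \(λ\)-partition independently, using the color classes \(C_1, \dots, C_t\) that come packaged with the definition of a \(λ\)-assignment. Write \(λ = \{k_1, k_2, \dots, k_t\}\).

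First I would fix a \(λ\)-partition \(V_1, \dots, V_t\) of \(V(G)\), so that each \(G[V_i]\) is \(k_i\)-choosable. Then, given any \(λ\)-assignment \(L\) of \(G\), I would invoke the partition of \(\bigcup_{v} L(v)\) into sets \(C_1, \dots, C_t\) with \(|L(v) \cap C_i| = k_i\) for every \(v \in V(G)\) and every \(i\). The key point is that restricting to \(V_i\), the map \(v \mapsto L(v) \cap C_i\) is a genuine \(k_i\)-assignment of \(G[V_i]\): each such list has size \emph{exactly} \(k_i\). Since \(G[V_i]\) is \(k_i\)-choosable, it admits a proper coloring \(\phi_i\) of \(G[V_i]\) with \(\phi_i(v) \in L(v) \cap C_i\) for all \(v \in V_i\).

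Next I would glue these colorings into a single map \(\phi\) on \(V(G)\) by setting \(\phi(v) = \phi_i(v)\) whenever \(v \in V_i\). To see \(\phi\) is a proper \(L\)-coloring, note first that \(\phi(v) \in L(v)\) for every vertex. For an edge \(uv\): if both endpoints lie in the same block \(V_i\), then \(\phi(u) \ne \phi(v)\) since \(\phi_i\) is proper; if \(u \in V_i\) and \(v \in V_j\) with \(i \ne j\), then \(\phi(u) \in C_i\) and \(\phi(v) \in C_j\), which are disjoint, so again \(\phi(u) \ne \phi(v)\). Hence \(\phi\) is a proper \(L\)-coloring, and since \(L\) was an arbitrary \(λ\)-assignment, \(G\) is \(λ\)-choosable.

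There is no substantive obstacle here; the only things requiring a moment's care are that \(|L(v) \cap C_i|\) equals \(k_i\) on the nose (immediate from the definition of a \(λ\)-assignment, so the restricted lists are legitimate \(k_i\)-lists), and that the pairwise disjointness of the \(C_i\) is exactly what makes every edge crossing between two different blocks automatically properly colored.
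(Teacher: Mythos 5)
Your proposal is correct and follows exactly the paper's argument: restrict each vertex's list to its intersection with \(C_i\) on the block \(V_i\), invoke \(k_i\)-choosability of \(G[V_i]\), and use the pairwise disjointness of the \(C_i\) to handle edges between blocks. You simply spell out the details more explicitly than the paper does.
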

    
    \begin{proof}
        Let \(λ = \{k_1, \dots, k_t\}\) and let \(V_1, \dots, V_t\) be a \(λ\)-partition of \(V(G)\).
        Let \(L\) be a \(λ\)-assignment of \(G\) with color groups \(C_1, \dots, C_t\).
        Because \(G[V_i]\) is \(k_i\)-choosable, the vertices of \(V_i\) can be properly colored with the colors assigned to it from \(C_i\).
        Because all \(C_i\) are disjoint, \(G\) is \(L\)-colorable and therefore \(λ\)-choosable since \(L\) is an arbitrary \(λ\)-assignment.
    \end{proof}
    
    \begin{corollary}\label{tkc-imp-part:cor}
        If \(G\) is \(\{1*(k-2),2\}\)-partitionable, then \(G\) is not strictly \(k\)-colorable.
    \end{corollary}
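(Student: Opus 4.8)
The plan is simply to chain together the two immediately preceding observations. First I would invoke the observation that every \(λ\)-partitionable graph is \(λ\)-choosable, applied with the particular partition \(λ = \{1*(k-2),2\}\): the hypothesis that \(G\) is \(\{1*(k-2),2\}\)-partitionable then yields at once that \(G\) is \(\{1*(k-2),2\}\)-choosable. Next I would appeal to the earlier observation characterizing strict \(k\)-colorability, namely that \(G\) is strictly \(k\)-colorable if and only if \(G\) is \(k\)-colorable and \emph{not} \(\{1*(k-2),2\}\)-choosable. Since we have just shown that \(G\) \emph{is} \(\{1*(k-2),2\}\)-choosable, the second half of that equivalence fails, and hence \(G\) is not strictly \(k\)-colorable.

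There is essentially no obstacle here; the statement is a direct corollary of the two observations, which is why it is phrased as one. The only point worth a moment's care is that \(\{1*(k-2),2\}\) must be a legitimate integer partition of \(k\) for the argument to make sense, and this holds exactly in the range \(k \ge 3\) we have restricted to, since then \(k-2 \ge 1\). With that in hand the chain of implications goes through verbatim, and one could equally well finish via Definition \ref{strict:def} directly, observing that \(\{1*(k-2),2\} \ne \{1*k\}\) exhibits a partition of \(k\) other than \(\{1*k\}\) for which \(G\) is \(λ\)-choosable.
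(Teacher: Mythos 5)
Your proof is correct and is exactly the argument the paper intends (the corollary is left unproved there as an immediate consequence of the preceding observation): λ-partitionable implies λ-choosable, and being \(\{1*(k-2),2\}\)-choosable rules out strict \(k\)-colorability by Definition \ref{strict:def}. No issues.
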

    
    It's nice to note that this characterizes complete graphs for free.
    
    \begin{corollary}
        For \(n\ge2\), \(K_n\) is not strictly \(n\)-colorable.
    \end{corollary}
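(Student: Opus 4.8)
The plan is to deduce this immediately from Corollary \ref{tkc-imp-part:cor}: it suffices to show that \(K_n\) is \(\{1*(n-2),2\}\)-partitionable.

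First I would record two trivial facts: \(K_1\) is \(1\)-choosable, and \(K_2\) is \(2\)-choosable (a \(2\)-assignment to a pair of adjacent vertices always admits a proper coloring, so in fact \(\ch(K_2)=2\)). Then I would take the partition of \(V(K_n)\) into \(n-2\) singletons and one \(2\)-element set. This has \((n-2)+1 = n-1\) parts and covers all \(n\) vertices. Each singleton part induces \(K_1\), which is \(1\)-choosable, and the two-element part induces \(K_2\), which is \(2\)-choosable. Hence this is a valid \(\{1*(n-2),2\}\)-partition, so \(K_n\) is \(\{1*(n-2),2\}\)-partitionable, and Corollary \ref{tkc-imp-part:cor} gives that \(K_n\) is not strictly \(n\)-colorable.

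Finally, since \(\chi(K_n)=n\) and a graph can be strictly \(k\)-colorable only when \(k=\chi\), it follows that \(K_n\) fails to be strictly \(k\)-colorable for every positive integer \(k\). There is no real obstacle here; the only point to watch is the boundary case \(n=2\), where there are no singleton parts and the argument reduces to the \(2\)-choosability of \(K_2\) itself, which still holds.
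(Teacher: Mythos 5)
Your proof is correct and is exactly the argument the paper intends when it says the complete-graph case follows ``for free'' from Corollary \ref{tkc-imp-part:cor}: partition \(V(K_n)\) into \(n-2\) singletons and one pair, note \(K_1\) is \(1\)-choosable and \(K_2\) is \(2\)-choosable, and conclude \(K_n\) is \(\{1*(n-2),2\}\)-partitionable. Your handling of the boundary case \(n=2\) is also fine.
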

    
    This idea of \(λ\)-partitionability is a quick way to show certain graphs are not strictly \(k\)-colorable.
    This will be utilized for our final theorem.
    
    \begin{theorem}
        Let \(k\ge3\) and \(G_k\) be a complete \(k\)-partite graph. \(G_k\) is strictly \(k\)-colorable if and only if \(G_k\) contains at least one of \(K_{3*k}\), \(K_{2,4,6*(k-2)}\), or \(K_{2, 5*(k-1)}\) as a subgraph.
    \end{theorem}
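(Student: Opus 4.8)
The plan is to prove both directions using the apparatus already developed. For the forward direction, recall that every complete $k$-partite graph $G_k$ satisfies $\chi(G_k)=k$; so if $G_k$ contains $K_{3*k}$, $K_{2,4,6*(k-2)}$, or $K_{2,5*(k-1)}$ as a subgraph, then since each of those three graphs is strictly $k$-colorable (Lemmas~\ref{k3k:lem}, \ref{k246:lem}, and~\ref{k255:lem}), Observation~\ref{subgraph:obs} gives at once that $G_k$ is strictly $k$-colorable.

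For the converse I would argue the contrapositive. Write $G_k=K_{n_1,\dots,n_k}$ with $n_1\le n_2\le\cdots\le n_k$ and suppose $G_k$ contains none of the three graphs. Since each of those subgraphs has $k$ parts, all of them independent sets, any embedding must match its parts bijectively with the parts of $G_k$ so that each part is no larger than the part of $G_k$ it lands in; hence one checks directly that $G_k$ contains $K_{3*k}$ iff $n_1\ge3$, contains $K_{2,5*(k-1)}$ iff $n_1\ge2$ and $n_2\ge5$, and contains $K_{2,4,6*(k-2)}$ iff $n_1\ge2$, $n_2\ge4$, and $n_3\ge6$. So the hypothesis amounts to: $n_1\le2$; if $n_1=2$ then $n_2\le4$; and if $n_1=2$ and $n_2=4$ then $n_3\le5$. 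This leaves three cases: (i)~$n_1=1$; (ii)~$n_1=2$ and $n_2\le3$; (iii)~$n_1=2$, $n_2=4$, and $n_3\in\{4,5\}$. In cases (i) and (ii) I would exhibit a $\{1*(k-2),2\}$-partition and invoke Corollary~\ref{tkc-imp-part:cor}: with $P_1,\dots,P_k$ the parts, the partition $\{P_1\cup P_2,P_3,\dots,P_k\}$ works, since $P_3,\dots,P_k$ are independent (hence $1$-choosable) and $G_k[P_1\cup P_2]=K_{n_1,n_2}$ is a star $K_{1,n_2}$ in case (i) and is $K_{2,2}$ or $K_{2,3}$ in case (ii), all of which are $2$-choosable by the characterization of $2$-choosable graphs. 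Hence $G_k$ is $\{1*(k-2),2\}$-partitionable, so not strictly $k$-colorable.

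Case (iii) is the heart of the matter and the step I expect to be the main obstacle: here $G_k$ is \emph{not} $\{1*(k-2),2\}$-partitionable, so $\{1*(k-2),2\}$-choosability must be verified directly. Let $L$ be any $\{1*(k-2),2\}$-assignment, with color groups $C_1$ (two colors at each vertex) and $C_2,\dots,C_{k-1}$ (one color at each vertex). Reserving a group $C_i$ with $i\ge2$ for a single part $P_j$ and giving each vertex of $P_j$ its unique color from $C_i$ is automatically a proper coloring of $P_j$ using no color outside $C_i$; thus any $k-2$ of the $k$ parts can be colored from $C_2,\dots,C_{k-1}$ with no interaction, and it only remains to color the other two parts from $C_1$. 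So it suffices to show that at least one of $G_k[P_1\cup P_2]$, $G_k[P_1\cup P_3]$, $G_k[P_2\cup P_3]$ (each a complete bipartite graph) is colorable from the $2$-element sublists $L(v)\cap C_1$. Suppose $G_k[P_1\cup P_2]\cong K_{2,4}$ is not: if the two sublists on $P_1$ shared a color we could give both its vertices that color and then still color $P_2$, a contradiction; so these sublists are disjoint, say $\{\alpha,\beta\}$ and $\{\gamma,\delta\}$; and if some ``crossing'' pair $\{x,y\}$ with $x\in\{\alpha,\beta\}$ and $y\in\{\gamma,\delta\}$ were absent from $P_2$, coloring $P_1$ by $x$ and $y$ would extend; so the four sublists on $P_2$ are exactly the four crossing pairs---the unique bad $2$-assignment of $K_{2,4}$. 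If in addition $G_k[P_1\cup P_3]$ is not colorable, the same reasoning forces all four crossing pairs to appear on $P_3$, so among the at most five vertices of $P_3$ at most one has a sublist other than a crossing pair. Finally, $P_2$ can be properly colored using only the colors $\{\alpha,\beta\}$, or symmetrically only $\{\gamma,\delta\}$; a crossing pair lies in neither of these sets, so every crossing-sublist vertex of $P_3$ still has an available color, and since at most one vertex of $P_3$ has any other sublist we may choose whichever of $\{\alpha,\beta\}$, $\{\gamma,\delta\}$ is not equal to that sublist. Hence $G_k[P_2\cup P_3]$ is colorable from $C_1$, so $G_k$ is $L$-colorable; as $L$ was arbitrary, $G_k$ is $\{1*(k-2),2\}$-choosable, and hence, being $k$-colorable, not strictly $k$-colorable. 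This exhausts the cases and completes the proof.
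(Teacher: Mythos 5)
Your proof is correct and follows essentially the same route as the paper: the backward direction via Observation \ref{subgraph:obs} and the three lemmas, and the forward direction by contrapositive with the identical case split, using $\{1*(k-2),2\}$-partitionability for the easy cases and, in the hard case, forcing the unique bad $2$-assignment of $K_{2,4}$ on two pairs of parts before finishing with the $\{\alpha,\beta\}$ versus $\{\gamma,\delta\}$ split. The only differences are cosmetic: you re-derive the uniqueness of the bad assignment rather than citing Hoffman and Johnson, and you organize the hard case symmetrically around the three pairs among $P_1,P_2,P_3$ instead of the paper's sequential recoloring.
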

    
    \begin{proof}
        The backwards implication follows from Observation \ref{subgraph:obs} and Lemmas \ref{k3k:lem}, \ref{k246:lem}, and \ref{k255:lem}.
        
        Let \(G_k = K_{a_1, a_2, \dots, a_k}\) such that \(a_1 \le a_2 \le \cdots \le a_k\).
        Let \(V_1, \dots, V_k\) be the partite sets of \(G_k\) such that \(|V_i| = a_i\) for \(1 \le i \le k\).
        Let \(λ_k = \{1*(k-2),2\}\).
        Suppose \(G_k\) contains none of \(K_{3*k}\), \(K_{2,4,6*(k-2)}\), and \(K_{2, 5*(k-1)}\) as a subgraph.
        Then we have the following two cases:
        
        \textbf{Case 1:} \(a_1 = 1\), or \(a_1 = 2\) and \(a_2 \le 3\).
        In this case, \(G_k[V_1 \cup V_2]\) is \(2\)-choosable.
        So \(G_k\) is \(λ_k\)-partitionable, and hence not strictly \(k\)-colorable.
        
        \textbf{Case 2:} \(a_1 = 2\), \(a_2 = 4\), and \(a_3 \le 5\).
        It suffices to only consider \(a_3 = 5\).
        Let \(L_k\) be a \(λ_k\)-assignment of \(G_k\) with color groups \(C_1, C_2, \dots, C_{k-1}\) such that \(|L_k(v) \cap C_1| = 2\) and \(|L_k(v) \cap C_i| = 1\) for all \(v \in V(G_k)\) and for \(2 \le i \le (k-1)\).
        Color \(V_i\) with its colors from \(C_{i-1}\) for \(3 \le i \le k\).
        If \(V_1\) and \(V_2\) can be properly colored with \(C_1\), then we're done.
        If not, \(C_1\) on \(V_1\) and \(V_2\) is the unique bad \(2\)-assignment on \(K_{2,4}\).
        \begin{align*}
            &L_k(V_1) \cap C_1  &&L_k(V_2) \cap C_1\\
            &\{1,2\}            &&\{1,3\}\\
            &\{3,4\}            &&\{1,4\}\\
            &                   &&\{2,3\}\\
            &                   &&\{2,4\}
        \end{align*}
        Uncolor \(V_3\) with \(C_2\) and color \(V_2\) with \(C_2\).
        If \(V_1\) and \(V_3\) can't be colored with \(C_1\), then \(C_1\) on \(V_1\) and \(V_3\) contains the unique bad \(2\)-assignment on \(K_{2,4}\).
        \begin{align*}
            &L_k(V_1) \cap C_1  &&L_k(V_2) \cap C_1 &&L_k(V_3) \cap C_1\\
            &\{1,2\}            &&\{1,3\}           &&\{1,3\}\\
            &\{3,4\}            &&\{1,4\}           &&\{1,4\}\\
            &                   &&\{2,3\}           &&\{2,3\}\\
            &                   &&\{2,4\}           &&\{2,4\}\\
            &                   &&                  &&\{a,b\}
        \end{align*}
        Note that \(a\) and \(b\) are unknown colors from \(C_1\).
        Uncolor \(V_2\) with \(C_2\) and color \(V_1\) with \(C_2\).
        If \(1 \in \{a,b\}\), then we can color \(V_2\) with \(3,4\) and \(V_3\) with \(1,2\).
        We can do the same if \(2 \in \{a,b\}\).
        Otherwise, we can color \(V_2\) with \(1,2\) and \(V_3\) with \(3,4,a\) since \(a \ne 1,2\).
        Hence, \(G_k\) is \(λ_k\)-choosable, and therefore not strictly \(k\)-colorable.
    \end{proof}

\section*{Acknowledgements}

    I thank my former advisor Greg Puleo for introducing me to Zhu's paper on \(λ\)-choosability.
    I thank my current advisor Pete Johnson for his guidance and helpful discussions.

    \printbibliography
        
\end{document}